\numberwithin{equation}{section}
\newcommand{\R}{\mathbb{R}}
\newcommand{\C}{\mathbb{C}}
\newcommand{\F}{\mathcal{F}}
\newcommand{\qtq}[1]{\quad\text{#1}\quad}
\newtheorem{theorem}{Theorem}[section]
\newtheorem{proposition}[theorem]{Proposition}
\theoremstyle{definition}
\newcommand{\eps}{\varepsilon}
\renewcommand{\Re}{\text{Re\,}}
\newcommand{\Expect}{{\rm I\kern-.3em E}}
\begin{document}

\title[Recovery for Cubic NLS]{Recovery of a Cubic Nonlinearity for the Nonlinear Schr\"odinger Equation}

\author[C. Hogan]{Christopher C. Hogan}
\email{cch62d@mst.edu}
\author[J. Murphy]{Jason Murphy}
\email{jason.murphy@mst.edu}
\author[D. Grow]{David Grow}
\email{grow@mst.edu}

\address{Department of Mathematics \& Statistics, Missouri S\&T}

\begin{abstract}
We consider the problem of recovering a spatially-localized cubic nonlinearity in a nonlinear Schr\"odinger equation in dimensions two and three.  We prove that solutions with data given by small-amplitude wave packets accrue a nonlinear phase that determines the X-ray transform of the nonlinear coefficient. 
\end{abstract}

\maketitle

\section{Introduction}

We consider the following cubic nonlinear Schr\"odinger equation (NLS): 
\begin{equation}\label{NLS}
\begin{cases}
(i\partial_t + \tfrac12\Delta)u = \alpha |u|^2 u, \\ u|_{t=-T} = u_0. \end{cases}
\end{equation}
Here $T>0$, $u:\R_t\times\R_x^d\to\C$ with $d\in\{2,3\}$, and $\alpha\in C_c^\infty(\R^d)$ is nonnegative.

We study the problem of recovering the nonlinear coefficient $\alpha$. Watanabe \cite{Watanabe4} previously studied a similar problem (under a slightly more general formulation) from the inverse scattering perspective and proved that knowledge of the scattering map suffices to reconstruct the nonlinearity.  In this work, we adapt the approach of S\'a Barreto and Stefanov \cite{SS21, SS21p2}, who considered a similar recovery problem for the cubic wave equation (as well as more general nonlinear wave models).  In particular, we show that solutions to \eqref{NLS} with data given by small-amplitude wave packets accrue a nonlinear phase that determines the X-ray transform of $\alpha$. 

Our main result is the following theorem.

\begin{theorem}\label{T} Let $d\in\{2,3\}$, $p>1$, $a_0,\alpha\in\mathcal{S}(\R^d)$, with $\alpha$ compactly supported.  Fix $T>0$ large enough that the support of $\alpha$ is contained in $\{|x|<T\}$. 

Given $\xi\in\R^d$ with $|\xi|=1$, define
\begin{equation}\label{initial-data}
u_0(x) = \eps^{p}a_0(\eps \left[x+T\xi\right]) \exp\{i\left[x\cdot\xi+\tfrac{1}{2}T\right]\}
\end{equation}
and
\begin{equation}\label{tildeu}
\begin{aligned}v(t,x) = &\eps^{p}a_0(\eps(x-t\xi)) \exp\{i\bigl[x\cdot\xi - \tfrac{1}{2}t\bigr]\} \\
&\quad\times \exp\biggl\{-i|\eps^p a_0(\eps(x-t\xi))|^2\int_{-T}^t \alpha(x -(t-s)\xi))\,ds\biggr\}.
\end{aligned}
\end{equation}

For $\eps=\eps(T,p,\|\hat\alpha\|_{L^1})>0$ sufficiently small, the solution $u$ to \eqref{NLS} exists on the time interval $[-T,T]$ and satisfies
\begin{equation}\label{bd}
\|u-v\|_{L_{t,x}^\infty([-T,T] \times \R^d)} \lesssim_\sigma \max\{\eps^{p+2},\eps^{3p-2-\sigma}\}
\end{equation}
for sufficiently small $\sigma>0$.
\end{theorem}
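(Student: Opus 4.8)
The plan is to exhibit $v$ as an approximate solution of \eqref{NLS} whose defect is small and explicitly structured, and then to estimate $u-v$ by a perturbative Strichartz argument. \emph{Step 1 (the defect).} Write $v=e^{i\theta}\psi$ with $\theta=x\cdot\xi-\tfrac12 t$, $\psi=Ae^{-i\Phi}$, $A(t,x)=\eps^{p}a_0(\eps(x-t\xi))$ and $\Phi(t,x)=|A(t,x)|^{2}\int_{-T}^{t}\alpha(x-(t-s)\xi)\,ds$. Since $|\xi|=1$ one computes $(i\partial_t+\tfrac12\Delta)(e^{i\theta}\psi)=e^{i\theta}\bigl(i(\partial_t+\xi\cdot\nabla)\psi+\tfrac12\Delta\psi\bigr)$. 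Now $A$ and $|A|^{2}$ depend on $(t,x)$ only through $x-t\xi$, hence are annihilated by $\partial_t+\xi\cdot\nabla$, while the substitution $\tau=t-s$ gives $(\partial_t+\xi\cdot\nabla)\int_{-T}^{t}\alpha(x-(t-s)\xi)\,ds=\alpha(x)$; consequently $i(\partial_t+\xi\cdot\nabla)\psi=\alpha|\psi|^{2}\psi$ identically. Since $|v|=|\psi|$, this yields $e:=(i\partial_t+\tfrac12\Delta)v-\alpha|v|^{2}v=\tfrac12 e^{i\theta}\Delta\psi$: the defect is exactly the dispersive term dropped in the geometric-optics ansatz, and $u$, $v$ share the same data at $t=-T$.

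\emph{Step 2 (size of the defect).} Expanding $\Delta(Ae^{-i\Phi})$, every derivative landing on $A$ or $|A|^{2}$ costs a factor $\eps$ (these objects live at spatial scale $\eps^{-1}$), whereas derivatives on the truncated X-ray factor $\int_{-T}^{t}\alpha(x-(t-s)\xi)\,ds$ are $O(1)$ and those on $\Phi$ are $O(\eps^{2p})$, all controlled by $\|\hat\alpha\|_{L^1}$ and analogous quantities. This gives a splitting $e=e_{1}+e_{2}$: the term $e_{1}=\tfrac12 e^{i\theta}(\Delta A)e^{-i\Phi}$ is $O(\eps^{p+2})$ in $L^\infty$ and concentrated in $\{|x-t\xi|\lesssim\eps^{-1}\}$, while $e_{2}$ collects the rest, is $O(\eps^{3p})$-type in $L^\infty$, and — because of the $\alpha$ factors — is supported at scale $O(1)$ inside the tube swept by $\mathrm{supp}\,\alpha$ along $x=t\xi$.

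\emph{Step 3 (perturbation).} Setting $u=v+w$ gives $w|_{t=-T}=0$ and $i\partial_t w+\tfrac12\Delta w=\alpha\bigl(|v+w|^{2}(v+w)-|v|^{2}v\bigr)-e$. Local $H^s$ well-posedness ($s$ large, $u_0\in\mathcal S$) produces $u$ on a short interval; a continuity/bootstrap argument, using Strichartz estimates together with the smallness of $e$ from Step 2 and the smallness of $v$ in the relevant Strichartz norms (valid on the fixed interval $[-T,T]$, since $T\ll\eps^{-2}$), then extends $u$ to all of $[-T,T]$. I would bootstrap $\|w\|$ in a norm combining the $\dot H^{s_c}$-critical Strichartz norm $S^{s_c}([-T,T])$, $s_c=\tfrac d2-1$ (the cubic nonlinearity is energy-subcritical for $d=3$ and $L^{2}$-critical for $d=2$) — used to absorb the self-interactions of $w$ — with a supercritical norm $S^{d/2+\sigma}([-T,T])$, which embeds into $L^\infty_{t,x}$. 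In the dual norms the contribution of $e_1$ scales like $\eps^{p+2}$ (its $\eps^{-1}$ concentration makes the extra derivatives harmless), and that of $e_2$ scales like $\eps^{3p-2-\sigma}$, the loss $\eps^{-2-\sigma}$ being the genuine cost of placing the scale-$O(1)$ function $e_2$ — itself two derivatives below $v$ — into a space $\tfrac d2+\sigma$ derivatives above $L^2$. Taking $\eps=\eps(T,p,\|\hat\alpha\|_{L^1})$ small enough for the iteration to close gives $\|u-v\|_{L^\infty_{t,x}}\lesssim\|w\|_{S^{d/2+\sigma}}\lesssim\max\{\eps^{p+2},\eps^{3p-2-\sigma}\}$, which is \eqref{bd}; the hypothesis $p>1$ is what makes this $o(\eps^{p})=o(\|v\|_{L^\infty_{t,x}})$, so that $v$ is genuinely the leading-order profile.

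\emph{The main obstacle} is exactly this passage to $L^\infty_{t,x}$ with the stated powers. A crude route — propagating $w$ in a fixed $H^s$ with $s>d/2$ and using Sobolev embedding — loses an extra $\eps^{-d/2}$ from the wave-packet concentration and misses the claim; instead one must follow the two length scales of $e$ (hence of $w$) separately, using dispersive estimates adapted to each: on $[-T,T]$ the scale-$\eps^{-1}$ piece barely disperses (as $T\ll\eps^{-2}$), so its $L^\infty$ norm is essentially preserved, whereas the scale-$O(1)$ piece enjoys $|t|^{-d/2}$ decay after unit time. Balancing the short-time (regularity) and long-time (dispersive) contributions to the Duhamel integral, together with the margin required by $H^{d/2+\sigma}\hookrightarrow L^\infty$, is what forces the $\eps^{-\sigma}$ loss and pins down the exponents $p+2$ and $3p-2-\sigma$. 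A secondary point is that $\|u_0\|_{H^s}\sim\eps^{p-d/2}$ is large when $p<d/2$, so the time of existence from subcritical $H^s$ theory does not by itself reach $T$, and must be recovered through the bootstrap built on Strichartz estimates.
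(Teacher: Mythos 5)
Your Step~1 is correct and coincides with the paper's construction: the same geometric-optics ansatz, the same transport computation showing $i(\partial_t+\xi\cdot\nabla)\psi=\alpha|\psi|^2\psi$, and the same defect identity $e=\tfrac12 e^{i\theta}\Delta\psi$ (the paper derives it by characteristics in the rescaled variables $(t',x')=(\eps^2[t+T],\eps[x+T\xi])$, but it is the identical statement, cf.\ \eqref{ECon}). The overall architecture --- explicit approximate solution plus a perturbation argument for $w=u-v$ with vanishing data at $t=-T$ --- also matches.

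The gap is in Step~3, and you have in fact named it yourself without closing it. A bootstrap in $S^{d/2+\sigma}$ with the forcing $e_1=\tfrac12 e^{i\theta}(\Delta A)e^{-i\Phi}$ measured in a dual Strichartz norm such as $L^1_tH^{d/2+\sigma}$ gives $\|w\|_{S^{d/2+\sigma}}\gtrsim T\,\eps^{p+2-d/2}$, because of the $L^2$ normalization of the wave packet; the embedding $H^{d/2+\sigma}\hookrightarrow L^\infty$ then yields only $\eps^{p+2-d/2}$ and misses \eqref{bd} by $\eps^{-d/2}$ --- exactly the loss you flag. Your proposed repair (``follow the two length scales separately,'' ``the scale-$\eps^{-1}$ piece barely disperses so its $L^\infty$ norm is essentially preserved,'' ``balance short-time and long-time contributions'') is a plan rather than an argument: $e^{it\Delta/2}$ is not bounded on $L^\infty$, so preservation of the $L^\infty$ norm of the Duhamel integral of $e_1$, and of the cross terms $\alpha|v|^2w$, $\alpha v^2\bar w$, etc.\ which mix the two scales, must actually be proved, and no mechanism producing the exponents $p+2$ and $3p-2-\sigma$ is supplied. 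The paper's resolution is to abandon Strichartz/$H^s$ spaces entirely and to run both the well-posedness and the stability arguments in the Wiener algebra $\F L^1$: it is a Banach algebra (so the cubic nonlinearity costs no derivatives), the free propagator is an isometry on it (so the Duhamel integral over $[-T,T]$ costs only a factor of $T$), it is invariant under scaling and modulation (so $\|v\|_{L^\infty_t\F L^1}\lesssim\eps^p\|a\|_{L^\infty_t\F L^1}$ with no $\eps^{-d/2}$ loss), and it embeds into $L^\infty$. Sobolev norms enter only through the one-line bound $\|\cdot\|_{\F L^1}\lesssim_\sigma\|\cdot\|_{H^{d/2+\sigma}}$ applied to the rescaled amplitudes $a$ and $\Delta a$; that is also where the factors $\eps^{-2}$ and $\eps^{-\sigma}$ in $\eps^{3p-2-\sigma}$ actually originate (each derivative falling on $\alpha(\tfrac{x}{\eps}-T\xi)$ costs $\eps^{-1}$ in the rescaled frame, partially offset by the $\eps^{d/2}$ coming from the radius-$\eps T$ support), not from the ``cost of placing a scale-$O(1)$ function into $H^{d/2+\sigma}$'' as your accounting for $e_2$ suggests. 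Either adopt the $\F L^1$ framework or carry out the two-scale dispersive analysis in full; as written, the passage to $L^\infty_{t,x}$ with the stated powers is not established.
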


Theorem~\ref{T} shows that by solving \eqref{NLS} with data as in \eqref{initial-data}, we can effectively recover the X-ray transform of $\alpha$, which in turn suffices to reconstruct $\alpha$ itself (see e.g. \cite{Deans}).  Indeed, we first note the approximate solution $v(T,x)$ exhibits the integral of $\alpha$ over the line joining $x$ to $x-2T\xi$, so that by varying $\xi\in\mathbb{S}^{d-1}$ and $x\in\text{supp}(\alpha)\subset \{|x|<T\}$, we can obtain all line integrals of $\alpha$.  We then observe that in the regime $p>1$ and $\eps\ll 1$, we have that the error in \eqref{bd} is much smaller than the size of the solution and approximate solution themselves.  The proof will show that the implicit constant in \eqref{bd} will involve some positive power of $T$; however, as $\eps$ must already be chosen depending on $T$, this does not affect the main conclusion of the theorem.  Our proof will also require that $\eps$ be chosen sufficiently small depending on $\|\hat\alpha\|_{L^1}$, which means that some degree of \emph{a priori} information about $\alpha$ is necessary.  In practice, a bound for this norm could be obtained if one had estimates for $T$ and $\|\alpha\|_{W^{2,\infty}}$ (see e.g. \eqref{practical} below).

We were motivated to study this problem by the recent work \cite{SS21}, which proved an analogous result in the setting of the nonlinear wave equation.  In that work, the authors considered initial data of the form
\[
u(t,x) = h^{-\frac12} e^{i(x\cdot\xi-t)/h}\chi(x\cdot\xi-t)\quad (0<h\ll 1) 
\]
outside of the support of $\alpha$ and constructed a suitable geometric optics approximation involving the same nonlinear phase shift as the one appearing in Theorem~\ref{T}.  This work was further generalized in \cite{SS21p2}. Our construction of the approximate solution \eqref{tildeu} is a modification of the one appearing in the work \cite{CDS10}, which constructed geometric optics solutions in the NLS setting.  As in that paper, we will establish well-posedness and stability in the Wiener algebra $\F L^1$.  In fact, the proof of Theorem~\ref{T} will show that we actually control the error in \eqref{bd} in the stronger norm $L_t^\infty \F L^1$. 

As mentioned above, Theorem~\ref{T} is closely related to the work \cite{Watanabe4}, which considered the problem of reconstructing nonlinearities of the form $q(x)|u|^{p-1}u$ (with both $q$ and $p$ unknown) from knowledge of the scattering map (we refer the reader to \cite{CarGal, MorStr, SBUW, Weder6, Weder3, Sasaki3, KMV} for a sample of other results of this type).  In \cite{Watanabe4}, the coefficient $q$ was required to satisfy some smoothness and decay assumptions, but was not required to be compactly supported.  Our smoothness requirements on $\alpha$ are essentially the same as in \cite{Watanabe4} (the proof will show that we need to control at least $d/2$ derivatives).  However, the compact support assumption is essential for our arguments; indeed, the proof will show that $\eps\to 0$ if $T\to\infty$.  In this work, we regard the power of the nonlinearity as fixed.  For simplicity, we have chosen to work with the cubic power.  The same proof would treat more general nonlinearities of the form $\alpha(x) |u|^{2k} u$ for $k\in\mathbb{N}$, although our use of the space $\F L^1$ does require an algebraic nonlinearity (cf. \eqref{algebraic} below). 

Let us now briefly describe the strategy of the proof of Theorem~\ref{T}.  The key step is the construction of the approximate solution $v$ appearing in \eqref{tildeu}.  Keeping in mind the scaling symmetry and Galilean invariance of the underlying linear equation, we look for a solution of the form
\[
v(t,x) = \eps^p a(\eps^2[t+T],\eps[x+T\xi])\, e^{i[x\cdot\xi-\frac12|\xi|^2 t]},
\]
with $v|_{t=0}=u_0$ as in \eqref{initial-data}.  Direct computation shows that if we define $a$ to satisfy a suitable nonlinear transport equation, then $v$ solves \eqref{NLS} up to an error term involving only $\Delta a$.  The rest of the proof then consists of (i) establishing a suitable stability theory for \eqref{NLS} in $\F L^1$ and (ii) proving suitable estimates for the error term in this norm.  The use of the space $\F L^1$ was inspired by the paper \cite{CDS10} and turns out to be a convenient space for establishing well-posedness and stability.  Ultimately, we are able to obtain a suitable estimate in this space only in the regime $p>1$ (recall that the amplitude of the solution is $\eps^p$).  It is an interesting problem to consider the same problem formulated in more familiar spaces, such as the energy space $H^1$.

The rest of this paper is organized as follows: In Section~\ref{S:prelim} we introduce some notation and collect some basic estimates.  In Section~\ref{S:WP}, we prove well-posedness and stability of \eqref{NLS} in the space $\F L^1$.  In Section~\ref{S:approx}, we construct the approximate solution $v$ to \eqref{NLS} appearing in Theorem~\ref{T} and prove estimates for the error term involving $\Delta a$.  Finally, in Section~\ref{S:proof}, we carry out the proof of Theorem~\ref{T}.

\subsection*{Acknowledgements} J. M. was supported by a Simons Collaboration Grant.

\section{Preliminaries}\label{S:prelim} We write $A\lesssim B$ to denote $A\leq CB$ for some $C>0$.  Dependence on parameters will be indicated by subscripts, i.e. $A\lesssim_T B$ denotes $A\leq CB$ for some $C=C(T)>0$.  We use $C$ to denote a positive constant whose value may change from line to line. 

We make use of the Wiener algebra $\F L^1(\R^d)$, equipped with the norm
\[
\|u\|_{\F L^1} = \| \hat u\|_{L^1}.
\] 
Here $\hat u$ denotes the Fourier transform 
\[
\hat u(y)=\F u(y) = \int_{\R^d} e^{-ix\cdot y}u(x)\,dx.
\]
We record the following algebra property:
\begin{align*}
\| uv\|_{\F L^1} = \| \F[uv]\|_{L^1} = \|\hat u \ast \hat v\|_{L^1} \leq \|\hat u\|_{L^1} \|\hat v\|_{L^1} = \|u\|_{\F L^1}\|v\|_{\F L^1} 
\end{align*}
We also observe the embedding $\F L^1 \hookrightarrow L^\infty$, which follows from the Hausdorff--Young inequality: 
\[
\|u\|_{L^\infty}=\|\F^{-1}\hat u\|_{L^\infty} \lesssim \|\hat u\|_{L^1}=\|u\|_{\F L^1}. 
\]

Finally, we note that by Cauchy--Schwarz and Plancherel, we can obtain the following estimate:
\begin{equation}\label{practical}
\|u\|_{\F L^1} \lesssim_\sigma \|(1+|y|^2)^{\frac{d}{4}+\sigma}\hat u\|_{L^2} \lesssim_\sigma \|u\|_{H^{\frac{d}{2}+\sigma}} 
\end{equation}
for any $\sigma>0$. 

In what follows, we will write 
\begin{equation}\label{propagator}
e^{it\Delta/2}=\F^{-1}e^{-it|y|^2/2}\F
\end{equation}
for the free Schr\"odinger propagator.

\section{Well-posedness and stability}\label{S:WP}

In this section we establish well-posedness and stability for \eqref{NLS} in the Wiener algebra.  We begin with the well-posedness result.  We will construct a solution to the Duhamel formula for \eqref{NLS}, i.e.
\[
u(t)=e^{i(t+T)\Delta/2}u_0 -i \int_{-T}^t e^{i(t-s)\Delta/2}\alpha [|u|^2 u](s)\,ds.
\]

\begin{proposition}[Well-posedness in $\F L^1$]\label{WP} Let $d\in\{2,3\}$, $\alpha\in\F L^1$, and $T>0$. There exists $\delta_0=\delta_0(T,\|\hat\alpha\|_{L^1})>0$ such that for any $u_0\in\F L^1(\R^d)$ with
\[
\|\hat u_0\|_{L^1}<\delta_0, 
\]
there exists a unique solution $u\in L_t^\infty([-T,T];\F L^1(\R^d))$ to \eqref{NLS} satisfying
\begin{equation}\label{u-bds}
\|\hat u\|_{L_t^\infty L^1([-T,T]\times\R^d)}\leq 2\|\hat u_0\|_{L^1}. 
\end{equation}
\end{proposition}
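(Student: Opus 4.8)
The plan is to construct the solution via a contraction mapping argument applied to the Duhamel map
\[
\Phi(u)(t) = e^{i(t+T)\Delta/2}u_0 - i\int_{-T}^t e^{i(t-s)\Delta/2}\,\alpha\,[|u|^2u](s)\,ds
\]
on the complete metric space
\[
X = \bigl\{\, u\in L_t^\infty([-T,T];\F L^1) : \|\hat u\|_{L_t^\infty L^1}\le 2\|\hat u_0\|_{L^1}\,\bigr\},
\]
equipped with the $L_t^\infty\F L^1$ metric. The two structural facts doing all the work are already recorded in the excerpt: first, the free propagator $e^{it\Delta/2}=\F^{-1}e^{-it|y|^2/2}\F$ is an isometry on $\F L^1$, since it acts as multiplication by a unimodular function on the Fourier side, so $\|e^{is\Delta/2}f\|_{\F L^1}=\|f\|_{\F L^1}$ for every $s$; second, $\F L^1$ is a Banach algebra, so $\||u|^2u\|_{\F L^1}=\|u\bar u u\|_{\F L^1}\le \|u\|_{\F L^1}^3$ and likewise $\|\alpha\,|u|^2u\|_{\F L^1}\le \|\hat\alpha\|_{L^1}\|u\|_{\F L^1}^3$. (Here one uses that complex conjugation only reflects the Fourier transform, hence preserves the $L^1$ norm of $\hat u$.)

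First I would verify that $\Phi$ maps $X$ into itself. Applying the isometry property and pulling the $\F L^1$ norm inside the time integral gives
\[
\|\widehat{\Phi(u)}(t)\|_{L^1} \le \|\hat u_0\|_{L^1} + \int_{-T}^{T}\|\hat\alpha\|_{L^1}\,\|u(s)\|_{\F L^1}^3\,ds \le \|\hat u_0\|_{L^1} + 2T\,\|\hat\alpha\|_{L^1}\,(2\|\hat u_0\|_{L^1})^3.
\]
Choosing $\delta_0=\delta_0(T,\|\hat\alpha\|_{L^1})$ small enough that $2T\|\hat\alpha\|_{L^1}\cdot 8\delta_0^2 \le \tfrac12$ forces the right-hand side below $2\|\hat u_0\|_{L^1}$ whenever $\|\hat u_0\|_{L^1}<\delta_0$, so $\Phi(X)\subseteq X$. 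Next I would show $\Phi$ is a contraction: writing $|u|^2u-|w|^2w$ as a sum of terms each of which is a product of three factors drawn from $\{u,\bar u,w,\bar w\}$ with one factor replaced by a difference $u-w$ or $\bar u-\bar w$, the algebra property bounds $\||u|^2u-|w|^2w\|_{\F L^1}\lesssim (\|u\|_{\F L^1}^2+\|w\|_{\F L^1}^2)\|u-w\|_{\F L^1}$. Hence
\[
\|\Phi(u)-\Phi(w)\|_{L_t^\infty\F L^1} \lesssim T\,\|\hat\alpha\|_{L^1}\,\delta_0^2\,\|u-w\|_{L_t^\infty\F L^1},
\]
and after possibly shrinking $\delta_0$ the constant is $\le\tfrac12$. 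The Banach fixed point theorem then yields a unique fixed point in $X$, which is the desired solution, and it automatically satisfies \eqref{u-bds}.

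Finally, uniqueness in the full class $L_t^\infty([-T,T];\F L^1)$ (not merely within the ball $X$) follows by a standard argument: given two solutions, their difference satisfies a Duhamel identity, and on any subinterval short enough that the difference estimate above has constant $<1$ one gets the difference is zero; since both solutions are bounded in $\F L^1$ on $[-T,T]$, the length of such subintervals can be taken uniform, so one propagates uniqueness across $[-T,T]$ in finitely many steps. I do not expect any genuine obstacle here — the argument is a textbook Banach-algebra fixed point scheme — but the one point requiring a little care is the bookkeeping in the difference estimate for the cubic term (keeping track of conjugates and making sure every resulting product really does contain a factor of $u-w$ or its conjugate), together with the observation that the smallness threshold $\delta_0$ must be allowed to depend on both $T$ and $\|\hat\alpha\|_{L^1}$ simultaneously, since the relevant small parameter is the combination $T\|\hat\alpha\|_{L^1}\delta_0^2$.
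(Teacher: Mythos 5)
Your proposal is correct and follows essentially the same route as the paper: a Banach fixed point argument for the Duhamel map on the ball of radius $2\|\hat u_0\|_{L^1}$ in $L_t^\infty([-T,T];\F L^1)$, using that the propagator is a Fourier-side unimodular multiplier (hence an isometry on $\F L^1$) together with the algebra property, and the same trilinear decomposition of $|u|^2u-|w|^2w$ for the contraction step. The only difference is that you additionally sketch uniqueness in the full class $L_t^\infty([-T,T];\F L^1)$ rather than just within the ball, which is a harmless strengthening.
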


\begin{proof} We fix $T>0$ and $\alpha\in \F L^1$.  Let $\delta_0>0$ be a small parameter to be determined below and let $u_0\in\F L^1$ satisfy
\[
\|\hat u_0\|_{L^1}<\delta_0.
\]
We define the complete metric space $(B,d)$ via
\[
B=\bigl\{u\in L_t^\infty([-T,T];\F L^1(\R^d)): \|\hat u\|_{L_t^\infty L^1([-T,T]\times\R^d)} \leq 2\|\hat u_0\|_{L^1}\bigr\}
\]
and
\[
d(u,v)=\|\hat u-\hat v\|_{L_t^\infty L^1([-T,T]\times\R^d)}. 
\]
We further define
\begin{equation}\label{Duhamel}
\Psi[u](t):=e^{i(t+T)\Delta/2}u_0 -i \int_{-T}^{t} e^{i(t-s)\Delta/2}\bigl[\alpha\,|u(s)|^2u(s)\bigr] \, ds.
\end{equation}
We will show that for $\delta_0$ sufficiently small, $\Psi$ is a contraction on $B$.

Let $u\in B$. For each $t\in[-T,T]$, we use \eqref{propagator} and the algebra property to estimate
\begin{align*}
\left\|\F[\Psi u](t) \right\|_{L^1}
&\leq \|\hat u_0\|_{L^1}
+ \left\|\int_{-T}^{t} e^{-i(t-s)|y|^2/2}\F\bigl[\alpha|u|^2 u\bigr] \, ds \right\|_{L^1} \\
&\leq \|\hat u_0\|_{L^1} + \int_{-T}^{t} \|\alpha\|_{\F L^1} \|u(s)\|_{\F L^1}^3\,ds \\
& \leq \|\hat u_0\|_{L^1} + 16T\delta_0^2\,\|\hat\alpha\|_{L^1} \|\hat u_0\|_{L^1} \\
& \leq 2\|\hat u_0\|_{L^1}
\end{align*}
for $\delta_0=\delta_0(T,\|\hat\alpha\|_{L^1})$ sufficiently small.  Taking the supremum over $t\in[-T,T]$, we obtain that $\Psi:B\to B$. 

Next, we let $u,v\in B$ and estimate as above to obtain
\[
\|\F[\Psi u](t) - \F[\Psi v](t)\|_{L^1}
 \leq \int_{-T}^t \|\hat \alpha\|_{L^1} \||u|^2 u - |v|^2 v\|_{\hat L^1} \,ds.
\]
We now observe that
\begin{equation}\label{algebraic}
|u|^2 u - |v|^2 v = (u-v)[|v|^2 + \bar v u] + \overline{(u-v)}u^2.
\end{equation}
Thus, by the algebra property, we obtain
\begin{align*}
\|\F[\Psi u](t) - \F[\Psi v](t)\|_{L^1} & \leq 24T\delta_0^2\,\|\hat \alpha\|_{L^1} \|\hat u-\hat v\|_{L_t^\infty L^1([-T,T]\times\R^d)} \\
& \leq \tfrac12 \|\hat u-\hat v\|_{L_t^\infty L^1([-T,T]\times\R^d)}
\end{align*}
for $\delta_0=\delta_0(T,\|\hat\alpha\|_{L^1})$ small enough. Taking the supremum over $t\in[-T,T]$, we deduce that $\Psi$ is a contraction. 

Applying the Banach fixed point theorem, we deduce that for $\delta_0$ sufficiently small, $\Psi$ has a unique fixed point $u\in B$, yielding the desired solution to \eqref{NLS} satisfying \eqref{u-bds}. 
\end{proof}

We next establish a stability result for \eqref{NLS} in $\F L^1$.

\begin{proposition}[Stability in $\F L^1$]\label{Stability} Let $d\in\{2,3\}$, $\alpha\in \F L^1$, and $T>0$.  Let $\delta_0>0$ be as in Proposition~\ref{WP}.

If $v:[-T,T]\times\R^d\to\C$ satisfies
\begin{align}\label{StaCon1}
\|\hat v(-T,x)\|_{ L^1(\R^d)}&<\delta_0, \\
\label{StaCon1.5}
\|\hat v\|_{L_t^\infty L^1(\R^d)}&\lesssim \delta_0,
\end{align}
and
\begin{equation}
\label{StaCon2}
\biggl\| \int_{-T}^t e^{i(t-s)\Delta/2}[(i\partial_t+\tfrac12\Delta) v-\alpha|v|^2 v](s)\,ds\biggr\|_{L_t^\infty \F L^1([-T,T]\times\R^d)} = \delta, \end{equation}
then the solution $u$ to \eqref{NLS} with $u|_{t=-T}=v|_{t=-T}$ exists on $[-T,T]$ and satisfies
\begin{equation}\label{close}
\|\hat u-\hat v\|_{L_t^\infty L^1([-T,T]\times\R^d)} \lesssim \delta. 
\end{equation}
\end{proposition}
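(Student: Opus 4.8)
The plan is to set up a fixed-point argument for the difference $w = u - v$, mirroring the contraction scheme used in the proof of Proposition~\ref{WP}, but now tracking the source term measured by $\delta$. First I would write the Duhamel formula for $u$ and subtract the (inhomogeneous) Duhamel-type identity satisfied by $v$. Writing $F(v) = (i\partial_t + \tfrac12\Delta)v - \alpha|v|^2 v$ for the defect, one has
\[
v(t) = e^{i(t+T)\Delta/2}v(-T) - i\int_{-T}^t e^{i(t-s)\Delta/2}\bigl[\alpha|v|^2v\bigr](s)\,ds - i\int_{-T}^t e^{i(t-s)\Delta/2}F(v)(s)\,ds,
\]
so that, since $u(-T) = v(-T)$,
\[
w(t) = -i\int_{-T}^t e^{i(t-s)\Delta/2}\bigl[\alpha(|u|^2u - |v|^2v)\bigr](s)\,ds + i\int_{-T}^t e^{i(t-s)\Delta/2}F(v)(s)\,ds =: \Phi[w](t).
\]
The last term has $L_t^\infty\F L^1$ norm exactly $\delta$ by hypothesis \eqref{StaCon2}.

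Next I would run the contraction on a ball $\tilde B = \{w : \|\hat w\|_{L_t^\infty L^1} \leq C\delta\}$ in $C([-T,T];\F L^1)$ for a suitable absolute constant $C$. Using $u = v + w$ together with the algebraic identity \eqref{algebraic} (applied with the roles of the two functions being $u$ and $v$), the cubic difference $|u|^2u - |v|^2v$ is a sum of terms each containing one factor of $w$ and two factors drawn from $\{u, v, \bar u, \bar v\}$; by the algebra property of $\F L^1$ and the a priori bounds \eqref{StaCon1.5} and \eqref{u-bds} (the latter giving $\|\hat u\|_{L_t^\infty L^1} \lesssim \delta_0$ once we know $w$ stays small, i.e.\ $\|\hat v\|_{L_t^\infty L^1} + C\delta \lesssim \delta_0$), each such term is bounded by $\lesssim \delta_0^2 \|\hat w\|_{L_t^\infty L^1}$. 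Integrating in $s$ over $[-T,T]$ contributes a factor $2T$, so
\[
\|\F\Phi[w](t)\|_{L^1} \leq \delta + CT\delta_0^2\,\|\hat w\|_{L_t^\infty L^1} \leq \delta + CT\delta_0^2\cdot C\delta \leq C\delta
\]
provided $\delta_0 = \delta_0(T,\|\hat\alpha\|_{L^1})$ is chosen small enough that $C^2 T\delta_0^2 \leq \tfrac12$; shrinking $\delta_0$ if necessary relative to the one in Proposition~\ref{WP} is harmless. The same estimate applied to $\Phi[w_1] - \Phi[w_2]$ — now every term in the relevant algebraic expansion carries a factor $w_1 - w_2$ and two bounded factors — gives $d(\Phi[w_1],\Phi[w_2]) \leq CT\delta_0^2\, d(w_1,w_2) \leq \tfrac12 d(w_1,w_2)$, so $\Phi$ is a contraction on $\tilde B$.

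The fixed point $w$ then satisfies $\|\hat w\|_{L_t^\infty L^1([-T,T]\times\R^d)} \lesssim \delta$, which is \eqref{close}; moreover $v + w$ solves the Duhamel equation for \eqref{NLS} with the correct initial data, and since $\|\widehat{v+w}\|_{L_t^\infty L^1} \leq \|\hat v\|_{L_t^\infty L^1} + C\delta < \delta_0$ (after possibly shrinking $\delta$, which we may do since we only care about small $\delta$ — or absorbing the constant), uniqueness from Proposition~\ref{WP} identifies it with $u$, so in particular $u$ exists on all of $[-T,T]$. One mild technical point I would be careful about: the hypotheses \eqref{StaCon1.5} and the conclusion require knowing that $v$, which is only assumed to satisfy \eqref{StaCon1}–\eqref{StaCon2} rather than to be a genuine solution, lies in $C_t\F L^1$ so that the Duhamel manipulations are justified; this is automatic for the explicit $v$ in \eqref{tildeu} but should be noted. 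The main obstacle is purely bookkeeping — making sure the a priori smallness used to close the cubic estimates ($\|\hat v\|_{L_t^\infty L^1} + C\delta \lesssim \delta_0$) is consistent, i.e.\ that the implicit constant in \eqref{StaCon1.5} together with the output radius $C\delta$ does not overshoot $\delta_0$; this is arranged by taking $\delta$ small (the statement is only meaningful for small $\delta$ anyway) and $\delta_0$ small depending on $T$ and $\|\hat\alpha\|_{L^1}$, exactly as in Proposition~\ref{WP}.
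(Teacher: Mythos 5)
Your proposal is correct, and the core estimates are the same as the paper's: the Duhamel identity for $u-v$ with the defect term measured by \eqref{StaCon2}, the decomposition \eqref{algebraic} of the cubic difference, the algebra property of $\F L^1$, and the smallness $T\delta_0^2\|\hat\alpha\|_{L^1}\ll 1$. The one structural difference is how the estimate is closed. The paper does not run a second fixed-point argument: existence of $u$ on all of $[-T,T]$ follows immediately from Proposition~\ref{WP} applied to the datum $v(-T)$ (using \eqref{StaCon1}), and then \eqref{close} is obtained by a direct absorption argument, namely $\|\hat u-\hat v\|_{L_t^\infty L^1}\leq\tfrac12\|\hat u-\hat v\|_{L_t^\infty L^1}+\delta$, which is legitimate because this quantity is a priori finite by \eqref{u-bds} and \eqref{StaCon1.5}. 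This works for every $\delta$, with no need to identify a fixed point with $u$ afterwards. Your contraction on the ball of radius $C\delta$ also works, but it costs you the two extra technical points you yourself flag: the requirement $\|\hat v\|_{L_t^\infty L^1}+C\delta\lesssim\delta_0$ (i.e.\ $\delta$ small relative to $\delta_0$, which is harmless in the application but is not needed in the paper's route) and the uniqueness step identifying $v+w$ with the solution $u$. Your remark that $v$ need only be regular enough to justify the Duhamel manipulations is a fair observation that the paper leaves implicit.
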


\begin{proof} We fix $T>0$ and $\alpha\in\F L^1$.  Assume that $v:[-T,T]\times\R^d\to\C$ satisfies \eqref{StaCon1}--\eqref{StaCon2}.  We let $u:[-T,T]\times\R^d\to\C$ be the solution to \eqref{NLS} with $u|_{t=-T}=v|_{t=-T}$ satisfying \eqref{u-bds}, whose existence is guaranteed by Proposition~\ref{WP} and \eqref{StaCon1}.

We now observe that the difference $u-v$ satisfies the Duhamel formula
\[
u(t)-v(t) = -i\int_{-T}^t e^{i(t-s)\Delta/2}\bigl( \alpha[|u|^2 u - |v|^2 v](s)-E(s)\bigr)\,ds, 
\]
where
\[
E(t):=(i\partial_t + \tfrac12\Delta)v - \alpha |v|^2 v. 
\]
We now estimate as we did in the proof of Proposition~\ref{WP}.  Using \eqref{algebraic}, \eqref{StaCon2}, \eqref{StaCon1.5}, and \eqref{u-bds}, we have
\begin{align*}
\|\hat u(t)-\hat v(t)\|_{L^1} & \leq CT\|\hat\alpha\|_{L^1}\|\hat u-\hat v\|_{L_t^\infty L^1}(\|\hat u\|_{L_t^\infty L^1}^2+\|\hat v\|_{L_t^\infty L^1}^2) + \delta \\
& \leq CT\delta_0^2 \|\hat\alpha\|_{L^1}\, \|\hat u-\hat v\|_{L_t^\infty L^1} + \delta
\end{align*}
for some $C>0$, where all space-time norms are over $[-T,T]\times\R^d$.  Taking the supremum over $t\in[-T,T]$ and choosing $\delta_0$ smaller if necessary (so that $CT\delta_0^2\|\hat\alpha\|_{L^1}<\tfrac12$, say), we deduce that
\[
\|\hat u - \hat v\|_{L_t^\infty L^1([-T,T]\times\R^d)}\leq \tfrac12 \|\hat u-\hat v\|_{L_t^\infty L^1([-T,T]\times\R^d)}+\delta,
\]
which yields \eqref{close}, as desired.\end{proof}

\section{Construction of an approximate solution}\label{S:approx}

In this section we construct an approximate solution to \eqref{NLS} and prove the estimates needed to apply the stability result, Proposition~\ref{Stability}.

\begin{proposition}[Approximate solution]\label{ApproxSol} Fix $T>0$, $0<\eps\ll 1$, and $\xi\in\R^d$ with $|\xi|=1$.  Let $a_0\in \F L^1$ and define 
\begin{equation}\label{TESol}
a(t,x):=a_0(x-\tfrac{t\xi}{\eps})\exp\biggl\{-i\eps^{2p-2}|a_0(x-\tfrac{t\xi}{\eps})|^2\int_{0}^t \alpha\bigl[\tfrac{x}{\eps}-\tfrac{(t-s)\xi}{\eps^2} -T\xi \bigr]\,ds \biggr\}
\end{equation}
and
\begin{equation}\label{Ansatz}
v(t,x) :=\eps^{p}a(\eps^2 \left[t+T\right],\eps \left[x+T\xi\right])\exp\{i [x\cdot \xi -\tfrac12t]\}.
\end{equation}
Then we have the following:
\begin{itemize}
\item The function $v$ satisfies the initial condition
\begin{equation}\label{v-IC}
v(-T,x) = u_0(x):= \eps^p a_0(\eps[x+T\xi])\exp\{i[x\cdot\xi+\tfrac12T]\}.
\end{equation}
\item We have the following identity:
\begin{equation}\label{ECon}
\begin{aligned}
(i\partial_t & + \tfrac12\Delta)v - \alpha |v|^2 v \\
& = \tfrac12 \eps^{p+2}(\Delta a)(\eps^2 \left[t+T\right],\eps \left[x+T\xi\right])\exp\{i[x\cdot\xi-\tfrac12t]\}.
\end{aligned}
\end{equation}
\item The function a satisfies the following estimates: 
\begin{equation}\label{v-bds}
\|a\|_{L_t^\infty \F L^1}\lesssim_T 1.
\end{equation}
and
\begin{equation}\label{error-estimate}
\|\Delta a\|_{L_t^\infty \F L^1} \lesssim_{\sigma,T} \max\{1,\eps^{2p-4-\sigma}\}
\end{equation}
for all small $\sigma>0$.
\end{itemize}
\end{proposition}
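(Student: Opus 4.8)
The plan is to prove the three assertions by direct computation; the only substantive point is the identity \eqref{ECon}, the remaining estimates being bookkeeping with the $\F L^1$ algebra property. The underlying idea is that \eqref{Ansatz} is a free wave packet $\eps^p a_0(\eps[x+T\xi])e^{i[x\cdot\xi-\frac12 t]}$ — built from the scaling and Galilean symmetries of $i\partial_t+\tfrac12\Delta$ — whose amplitude has been corrected by a nonlinear phase, so that $a$ should satisfy a nonlinear transport equation along the ray in direction $\xi$, and \eqref{TESol} is exactly the solution of that equation. The initial condition \eqref{v-IC} is immediate: at $t=-T$ the time argument of $a$ in \eqref{Ansatz} is $0$, and $a(0,y)=a_0(y)$ from \eqref{TESol} since $\int_0^0\!\cdots\,ds=0$, while the phase becomes $e^{i[x\cdot\xi+\frac12 T]}$; this is precisely the $u_0$ of \eqref{initial-data}.

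For \eqref{ECon} I would set $v=\eps^p A(t,x)e^{i\phi}$ with $A(t,x)=a(\eps^2[t+T],\eps[x+T\xi])$ and $\phi=x\cdot\xi-\tfrac12 t$. Since $|\xi|=1$, the relations $\nabla\phi=\xi$, $\partial_t\phi=-\tfrac12$, $\Delta\phi=0$ give, after the zeroth-order terms $\pm\tfrac12 A$ cancel,
\[
(i\partial_t+\tfrac12\Delta)v=\eps^p\bigl[i(\partial_t A+\xi\cdot\nabla A)+\tfrac12\Delta A\bigr]e^{i\phi}.
\]
By the chain rule $\partial_t A+\xi\cdot\nabla A=\eps\,[(\eps\partial_\tau+\xi\cdot\nabla_y)a](\eps^2[t+T],\eps[x+T\xi])$ and $\Delta A=\eps^2(\Delta a)(\eps^2[t+T],\eps[x+T\xi])$, so \eqref{ECon} reduces to the claim that \eqref{TESol} satisfies the transport equation
\[
(\eps\partial_\tau+\xi\cdot\nabla_y)a=-i\,\eps^{2p-1}\,\alpha\bigl(\tfrac{y}{\eps}-T\xi\bigr)\,|a|^2 a.
\]
To check this, write $w=y-\tfrac{\tau\xi}{\eps}$ and note that $L:=\eps\partial_\tau+\xi\cdot\nabla_y$ annihilates every function of $w$ alone (as $\partial_\tau w=-\xi/\eps$), in particular $a_0(w)$ and $|a_0(w)|^2$. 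With $a=a_0(w)e^{i\psi}$, $\psi$ the (real, since $\alpha$ is real) phase of \eqref{TESol}, the product rule gives $La=i a\,(L\psi)$, and since $L|a_0(w)|^2=0$ we get $L\psi=-\eps^{2p-2}|a_0(w)|^2\,(LI)$ where $I(\tau,y):=\int_0^\tau\alpha[\tfrac{y}{\eps}-\tfrac{(\tau-s)\xi}{\eps^2}-T\xi]\,ds$. The crucial computation is $LI$: by the fundamental theorem of calculus $\partial_\tau I$ equals the boundary value $\alpha[\tfrac{y}{\eps}-T\xi]$ minus $\tfrac1{\eps^2}\int_0^\tau\xi\cdot\nabla\alpha[\cdots]\,ds$, whereas $\xi\cdot\nabla_y I=\tfrac1{\eps}\int_0^\tau\xi\cdot\nabla\alpha[\cdots]\,ds$, so the two integral terms cancel in $LI=\eps\,\partial_\tau I+\xi\cdot\nabla_y I$, leaving $LI=\eps\,\alpha[\tfrac{y}{\eps}-T\xi]$. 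Hence $L\psi=-\eps^{2p-1}|a_0(w)|^2\alpha[\tfrac{y}{\eps}-T\xi]$, and $|a|^2=|a_0(w)|^2$ gives the transport equation. Feeding this back, $i(\partial_t A+\xi\cdot\nabla A)=\eps^{2p}\alpha(x)|A|^2 A$ (using $\tfrac{y}{\eps}-T\xi=x$ at $y=\eps[x+T\xi]$), which cancels $\alpha|v|^2 v=\eps^{3p}\alpha(x)|A|^2 A\,e^{i\phi}$ exactly, leaving only $\tfrac12\eps^p\Delta A\,e^{i\phi}=\tfrac12\eps^{p+2}(\Delta a)(\eps^2[t+T],\eps[x+T\xi])e^{i\phi}$ — which is \eqref{ECon}.

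The estimates \eqref{v-bds}, \eqref{error-estimate} then follow from the $\F L^1$ algebra property together with: (i) $\F L^1$ is invariant, isometrically, under translations and under the dilation $g\mapsto g(\cdot/\eps)$ (change variables on the Fourier side); and (ii) for $\psi\in\F L^1$, multiplication by $e^{i\psi}$ is bounded on $\F L^1$, with $\|e^{i\psi}h\|_{\F L^1}\le e^{\|\psi\|_{\F L^1}}\|h\|_{\F L^1}$, seen by writing $e^{i\psi}=1+(e^{i\psi}-1)$ and expanding $e^{i\psi}-1=\sum_{n\ge1}(i\psi)^n/n!$ with the algebra property termwise. For the time variable in the range relevant to \eqref{Ansatz}, which we may take to be $\tau\in[0,2\eps^2 T]$, property (i) gives $\|I(\tau)\|_{\F L^1}\le|\tau|\,\|\hat\alpha\|_{L^1}\lesssim_T\eps^2$ and, since each spatial derivative on the integrand of $I$ produces a factor $\eps^{-1}$ (and $\nabla\alpha$ or $\Delta\alpha$, whose $\F L^1$ norms are finite for $\alpha$ Schwartz, or controlled via \eqref{practical}), $\|\nabla_y I(\tau)\|_{\F L^1}\lesssim_T\eps$ and $\|\Delta_y I(\tau)\|_{\F L^1}\lesssim_T 1$. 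Hence $\|\psi(\tau)\|_{\F L^1}\lesssim_T\eps^{2p}\le1$, $\|\nabla\psi(\tau)\|_{\F L^1}\lesssim_T\eps^{2p-1}$, $\|\Delta\psi(\tau)\|_{\F L^1}\lesssim_T\eps^{2p-2}$ (using $\|a_0\|_{\F L^1},\|\nabla a_0\|_{\F L^1},\|\Delta a_0\|_{\F L^1}$). Then $a=a_0(w)+a_0(w)(e^{i\psi}-1)$ together with (ii) gives \eqref{v-bds}, and differentiating $a=a_0(w)e^{i\psi}$ twice, bounding the resulting terms $\Delta(a_0(w))$, $\nabla(a_0(w))\cdot\nabla\psi$, $a_0(w)\Delta\psi$, $a_0(w)|\nabla\psi|^2$ in $\F L^1$ via (i), (ii), yields $\|\Delta a\|_{L_t^\infty\F L^1}\lesssim_{\sigma,T}\max\{1,\eps^{2p-2}\}\le\max\{1,\eps^{2p-4-\sigma}\}$ for $0<\eps<1$, which is \eqref{error-estimate}.

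I expect the main obstacle to be \eqref{ECon} — specifically, recognizing that \eqref{TESol} is the correct amplitude and verifying the cancellation in $LI$ of the two integral terms arising from $\eps\partial_\tau$ and from $\xi\cdot\nabla_y$; this cancellation is the whole point of the ansatz. Once the transport equation is in hand the rest is routine: the $\F L^1$ estimates need only the algebra property and the scaling invariance, the two things to watch being that $e^{i\psi}$, though not in $\F L^1$ itself, still acts as a bounded multiplier there, and that the powers of $\eps$ be tracked with care.
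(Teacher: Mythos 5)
Your proof is correct. For the construction of $a$ and the identity \eqref{ECon}, your verification that \eqref{TESol} solves the transport equation $(\eps\partial_\tau+\xi\cdot\nabla_y)a=-i\eps^{2p-1}\alpha(\tfrac{y}{\eps}-T\xi)|a|^2a$ --- including the cancellation in $LI$ of the two integral terms coming from $\eps\partial_\tau$ and $\xi\cdot\nabla_y$ --- is the same computation the paper performs, just run as a verification rather than as a derivation by the method of characteristics (the paper integrates the ODE $\dot b=-i\eps^{2p-2}\alpha|b|^2b$ along characteristics after noting $|b|^2$ is conserved). Where you genuinely diverge is in the estimates \eqref{v-bds} and \eqref{error-estimate}: the paper does not work directly in $\F L^1$, but instead invokes the embedding \eqref{practical}, estimates the $H^1$- and $H^2$-norms of $a$ (and of $\Delta a$) using the fact that any term where a derivative hits $\alpha$ is supported in a ball of radius $\sim\eps T$ (contributing $\eps^{d/2}$ in $L^2$), and interpolates to $H^{d/2+\sigma}$; this is precisely what produces the $\sigma$-loss in \eqref{error-estimate}. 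Your route --- the algebra property, the translation/dilation invariance of $\F L^1$, the multiplier bound $\|e^{i\psi}h\|_{\F L^1}\le e^{\|\psi\|_{\F L^1}}\|h\|_{\F L^1}$, and the observation that on the relevant time range $\tau\in[0,2\eps^2T]$ one has $\|I(\tau)\|_{\F L^1}\lesssim_T\eps^2$ --- is more elementary and in fact sharper: you get $\|\Delta a\|_{L_t^\infty\F L^1}\lesssim_T\max\{1,\eps^{2p-1},\eps^{2p-2}\}=1$ for $p>1$, with no $\sigma$-loss, which of course implies \eqref{error-estimate}. The only caveat (shared with the paper, whose $H^2$-estimate has the same requirement) is that your argument uses finiteness of $\|\nabla a_0\|_{\F L^1}$ and $\|\Delta a_0\|_{\F L^1}$, slightly more than the stated hypothesis $a_0\in\F L^1$; this is harmless since $a_0\in\mathcal{S}(\R^d)$ in Theorem~\ref{T}.
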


\begin{proof} We begin by deriving the form of $a$ appearing in \eqref{TESol}, the initial condition \eqref{v-IC}, and the identity \eqref{ECon}.  We let 
\[
\Phi=\Phi(t,x)=x\cdot \xi -\tfrac12t
\]
and look for an approximate solution to \eqref{NLS} of the form 
\begin{equation*}
v(t,x) :=\eps^{p}a(\eps^2 \left[t+T\right],\eps \left[x+T\xi\right])e^{i \Phi}.
\end{equation*}
Noting that $(i\partial_t+\tfrac12\Delta)e^{i\Phi}=0$ and writing 
\[
(t',x') = (\eps^2 \left[t+T\right],\eps \left[x+T\xi\right]),
\]
direct computation yields
\begin{align*}
(i\partial_t& + \tfrac12\Delta)v - \alpha |v|^2 v\\
&= \eps^{p+2} e^{i\Phi} \bigl\{ i\partial_t a +\tfrac{i\xi}{\eps} \cdot \nabla a - \eps^{2p-2}\alpha(x)|a|^2a + \tfrac12\Delta a\bigr\},
\end{align*}
where $a$ and its derivatives are evaluated at $(t',x')$.
To obtain the identity \eqref{ECon} and the initial condition \eqref{v-IC}, we should therefore choose $a$ to satisfy
\[
\begin{cases}
&i\partial_t a +\frac{i\xi}{\eps} \cdot \nabla a- \eps^{2p-2}\alpha(\tfrac{x}{\eps}-T\xi)|a|^2a = 0, \\
& a|_{t=0}=a_0.
\end{cases}
\]
We solve this equation via the method of characteristics.  Setting $x(t)=x_0+\tfrac{t\xi}{\eps}$ for some $x_0\in\R^d$ and $b(t)=a(t,x(t))$, we find that the equation along the characteristic is given by
\[
\dot b = -i\eps^{2p-2}\alpha(\tfrac{1}{\eps}x(t)-T\xi)|b|^2 b. 
\]
Observing that
\[
\tfrac{d}{dt}|b|^2 = 2\Re[-i\eps^{2p-2}\alpha(\tfrac{1}{\eps}x(t)-T\xi)|b|^4] = 0,
\]
we obtain a linear ODE for $b$, which we solve via
\[
b(t) = a_0(x_0)\exp\biggl\{-i\eps^{2p-2}|a_0(x_0)|^2\int_0^t \alpha(\tfrac{1}{\eps}x(s)-T\xi)\,ds\biggr\}.
\]
Finally, observing that for given $x\in\R^d$, we have $x(t)=x$ for $x_0=x-\tfrac{t\xi}{\eps}$, we obtain the formula \eqref{TESol} for $a$. 

It remains to prove the estimates appearing in \eqref{v-bds} and \eqref{error-estimate}.  

We first consider \eqref{v-bds}. By \eqref{practical}, it suffices estimate the $H^{s}$-norm, where $s=\tfrac{d}{2}+\sigma$ for some small $\sigma>0$. To do this, we will estimate the $H^1$- and $H^2$-norms separately and then interpolate.  Applying the product and chain rules, we see that potentially dangerous terms arise when derivatives land on the coefficient $\alpha$, as each derivative produces a factor of $\eps^{-1}$.  However, as $\alpha$ appears in the phase, the first derivative acting on $\alpha$ necessarily produces a factor of $\eps^{2p-2}$, as well.  In addition, if any derivative lands on $\alpha$ then the resulting function is automatically localized to a ball of radius $\eps T$, which contributes a factor of $\eps^{\frac{d}{2}}$ in the evaluation of the $L^2$-norm.  Thus we find that
\[
\|a\|_{H^1} \lesssim \max\{1, \eps^{2p-3+\frac{d}{2}}\} \qtq{and} \|a\|_{H^2}\lesssim \max\{1, \eps^{2p-4+\frac{d}{2}}\},
\]
so that (recalling $s=\tfrac{d}{2}+\sigma\in(1,2)$)
\begin{equation}\label{v-bds4}
\|a\|_{\F L^1} \lesssim_{\sigma} \|a\|_{H^s} \lesssim_\sigma \max\{1,\eps^{2p-2-\sigma}\}.
\end{equation}
Recalling that $p>1$ and choosing $\sigma=\sigma(p)$ sufficiently small, we obtain \eqref{v-bds}. 

The argument for \eqref{error-estimate} is essentially the same.  In particular, we need to estimate the $H^s$-norm of $\Delta a$, where $s=\tfrac{d}{2}+\sigma$ for some small $\sigma>0$.  The main difference is that now two additional derivatives could act on $\alpha$, producing two additional powers of $\eps^{-1}$.  In particular, the same argument that led to \eqref{v-bds4} now yields \eqref{error-estimate}. 
\end{proof}

\section{Proof of the main theorem}\label{S:proof}

We turn now to the proof of Theorem~\ref{T}.

\begin{proof}[Proof of Theorem~\ref{T}] We let $d,p,a_0,\alpha,T$, and $\xi$ be as in the statement of the theorem.  Given $\eps>0$, we define $u_0$ as in \eqref{initial-data}. 

We define the approximate solution $v$ satisfying the initial condition \eqref{v-IC} given in Proposition~\ref{ApproxSol}. As
\[
\|u_0\|_{\F L^1} = \eps^p \|a_0\|_{\F L^1},
\]
Proposition~\ref{WP} guarantees that the true solution $u$ to \eqref{NLS} with $u|_{t=-T}=u_0$ exists on $[-T,T]$ and satisfies \eqref{u-bds} provided $\eps$ is chosen sufficiently small.  We wish to use Proposition~\ref{Stability} to prove that $u$ and $v$ remain close in the $\F L^1$-norm on the interval $[-T,T]$.  To this end, we first observe that by scaling and translation invariance of $L^1$, we have
\[
\|v\|_{L_t^\infty \F L^1}\lesssim \eps^p \|a\|_{L_t^\infty \F L^1} \lesssim \eps^p,
\]
so that \eqref{StaCon1.5} holds for $\eps$ sufficiently small. Next, we use \eqref{error-estimate} to estimate
\begin{align*}
\biggl\| \int_{-T}^t&  e^{i(t-s)\Delta/2}[(i\partial_t+\tfrac12\Delta) v-\alpha|v|^2 v](s)\,ds\biggr\|_{L_t^\infty \F L^1([-T,T]\times\R^d)} \\
& \lesssim_T \eps^{p+2}\|\Delta a\|_{L_t^\infty \F L^1} \lesssim_{\sigma,T} \max\{\eps^{p+2},\eps^{3p-2-\sigma}\}
\end{align*}
for all $\sigma$ sufficiently small.  Thus \eqref{StaCon2} also holds provided $\eps$ is chosen sufficiently small. 

Applying Proposition~\ref{Stability}, we deduce that
\[
\|u-v\|_{L_t^\infty \F L^1([-T,T]\times\R^d)} \lesssim_\sigma \max\{\eps^{p+2},\eps^{3p-2-\sigma}\},
\]
which completes the proof of Theorem~\ref{T}.\end{proof}





\begin{thebibliography}{100}

\bibitem{SS21} A. S\'a Barreto and P. Stefanov, \emph{Recovery of a Cubic Non-linearity in the Wave Equation in the Weakly Non-linear Regime.} Comm. Math. Phys. \textbf{392}  (2022), no. 1, 25--53.

\bibitem{SS21p2} A. S\'a Barreto and P. Stefanov, \emph{Recovery of a general nonlinearity in the semilinear wave equation}. Preprint {\tt arXiv:2107.08513}. 

\bibitem{SBUW} A. S\'a Barreto, G. Uhlmann, and Y. Wang, \emph{Inverse Scattering for Critical Semilinear Wave Equations}. Preprint {\tt arXiv:2003.03822}.

\bibitem{CDS10} R. Carles, E. Dumas, and C. Sparber, \emph{Multiphase Weakly Nonlinear Geometric Optics for Schr\"odinger Equations.}	SIAM J. Math. Anal. \textbf{42} (2010), no. 1, 489--518.

\bibitem{CarGal} R. Carles and I. Gallagher, \emph{Analyticity of the scattering operator for semilinear dispersive equations.} Comm. Math. Phys. \textbf{286} (2009), no. 3, 1181--1209.

\bibitem{Deans} S. R. Deans, \emph{The Radon transform and some of its applications.} Revised reprint of the 1983 original. Robert E. Krieger Publishing Co., Inc., Malabar, FL, 1993. xii+295 pp. ISBN: 0-89464-718-0

\bibitem{KMV} R. Killip, J. Murphy, and M. Visan, \emph{The scattering map determines the nonlinearity.} Preprint {\tt arXiv:2207.02414}. 

\bibitem{MorStr} C. S. Morawetz and W. A. Strauss, \emph{On a nonlinear scattering operator.} Comm. Pure Appl. Math. \textbf{26} (1973), 47--54.

\bibitem{Sasaki3} H. Sasaki, \emph{Inverse scattering problems for the Hartree equation whose interaction potential decays rapidly.} J. Differential Equations \textbf{252} (2012), no. 2, 2004--2023. 


\bibitem{Watanabe4} M. Watanabe, \emph{Time-dependent method for non-linear Schr\"odinger equations in inverse scattering problems.} J. Math. Anal. Appl. \textbf{459} (2018), no. 2, 932--944. 

\bibitem{Weder6} R. Weder, \emph{$L^p$-$L^{p'}$ estimates for the Schr\"odinger equation on the line and inverse scattering for the nonlinear Schr\"odinger equation with a potential.} J. Funct. Anal. \textbf{170} (2000), no. 1, 37--68.

\bibitem{Weder3} R. Weder, \emph{Inverse scattering for the nonlinear Schr\"odinger equation. II. Reconstruction of the potential and the nonlinearity in the multidimensional case.} Proc. Amer. Math. Soc. \textbf{129} (2001), no. 12, 3637--3645.

\end{thebibliography}
\end{document}